\documentclass[11pt]{amsart}

\usepackage{epsf,amssymb,times,overpic,amscd}
\usepackage[usenames]{color}
\usepackage[all]{xy}

\usepackage{amsfonts}
\usepackage{amsmath}
\usepackage{graphicx}
\usepackage{epsfig}
\usepackage{epstopdf}

\newcommand{\ba}{\begin{array}}
\newcommand{\ea}{\end{array}}
\newcommand{\be}{\begin{enumerate}}
\newcommand{\ee}{\end{enumerate}}

\newtheorem{thm}{Theorem}[section]
\newtheorem{prop}[thm]{Proposition}
\newtheorem{lemma}[thm]{Lemma}

\theoremstyle{definition}
\newtheorem{defn}[thm]{Definition}

\theoremstyle{remark}

\newtheorem{rmk}[thm]{Remark}

\newcommand{\R}{\mathbb{R}}
\newcommand{\Z}{\mathbb{Z}}

\newcommand{\Hom}{\operatorname{Hom}}
\newcommand{\E}{\mathcal{E}}
\newcommand{\ci}{\mathcal{I}}
\newcommand{\n}{\noindent}
\newcommand{\im}{\sqrt{-1}}
\newcommand{\F}{\mathbb{F}_2}
\newcommand{\ot}{\otimes}
\newcommand{\ra}{\rightarrow}

\newcommand{\mf}{\mathbf}
\newcommand{\op}{\operatorname}
\newcommand{\cal}{\mathcal}

\newcommand{\lan}{\langle}
\newcommand{\ran}{\rangle}

\oddsidemargin0.1in
\evensidemargin0.1in
\textwidth6.0in
\topmargin0.5in
\textheight7.5in

\begin{document}
%%%%%%%%%%%%%%%%%%%%%%%%%%%%%%%%%%%%%%%%%%%%%%%%%%%%%%%%%
\title{A categorification of the square root of $-1$}

\author{Yin Tian}
\address{Simons Center for Geometry and Physics,
State University of New York,
Stony Brook, NY 11794}
\email{ytian@scgp.stonybrook.edu}

\begin{abstract}
We give a graphical calculus for a monoidal DG category $\cal{I}$ whose Grothendieck group is isomorphic to the ring $\Z[\im]$.
We construct a categorical action of $\cal{I}$ which lifts the action of $\Z[\im]$ on $\Z^2$.
\end{abstract}

\maketitle
%%%%%%%%%%%%%%%%%%%%%%%%%%%%%%%%%%%%%%%%%%%%%%%%%%%%%%%%%

\section{Introduction}
Categorification is a procedure which lifts operations on the level of sets or vector spaces to those on the level of categories.
For instance, the addition could be lifted to the operation of direct sum in an additive category; the multiplication with $-1$ could be upgraded to a shift functor in a triangulated category.
The square root $\im$ is another fundamental concept in mathematics.
The goal of this paper is to give a naive diagrammatic categorification of the ring $\Z[\im]$.

The program of categorification was initialized by Crane and Frenkel \cite{CF} in a construction of four-dimensional topological quantum field theory.
A celebrated example is Khovanov homology \cite{Kh1} of links in $S^3$ whose graded Euler characteristic agrees with the Jones polynomial.
Categorification at roots of unity could be the first step towards categorifying quantum invariants of $3$-manifolds.
In particular, Khovanov \cite{Kh2} categorified a prime root of unit using Hopfological algebras.
Later on the small quantum $\mathfrak{sl}(2)$ at a prime root of unit was categorified by Khovanov and Qi \cite{KhQ}.
In this context, this paper can be viewed as an attempt to categorify the simplest non-prime root of unit $\im$.
The construction of our categorification is diagrammatic.
The diagrammatic approach was pioneered by Lauda in his categorification of the quantum group $\mathbf{U}_q(\mathfrak{sl}_2)$ \cite{La}.

%One motivation of this paper is trying to understand the connection between categorification and mathematical physics.
%For instance, the boundary conditions in topological open string theory on Calabi-Yau $3$-manifolds are objects in the derived category of coherent sheaves in Kontsevich's homological mirror symmetry proposal \cite{Ko}.
%Douglas \cite{Do} defined the antibrane $\overline{A} \cong A[3]$ to a brane $A$ in the derived category.
%By passing to K-theory, their classes of a brane and its antipbrane are the opposite to each other.
%Therefore, the shift functor $[3]$ of the derived category can be viewed as a categorification of $-1$.
%We would like to know any possible explanation of categorification of $\im$ in physics.

Our main results are the followings.
Consider an action of $\Z[\im]$ on $\Z^2=\Z\lan x ,y \ran$ where $\im \cdot x =y, \im \cdot y = -x$.
Let $H(\cal{A})$ denote the $0$th cohomology category of a DG category $\cal{A}$.
Let $K_0(\cal{A})$ denote the Grothendieck group of $H(\cal{A})$ if $H(\cal{A})$ is triangulated.

\begin{thm} \label{thm1}
There exists a DG category $\cal{I}$ such that $H(\cal{I})$ is triangulated.
There is a monoidal bifunctor $\ot: \ci \times \ci \ra \ci$ whose decategorification makes $K_0(\ci)$ isomorphic to the ring $\Z[\im]$.
\end{thm}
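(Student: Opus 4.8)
The plan is to construct $\ci$ by hand from a graphical calculus, then deduce both statements from general DG-categorical principles together with one explicit acyclicity computation. First I would present a small DG category (equivalently, a DG algebra $A$ with a finite set of idempotents) diagrammatically: its morphism spaces are spanned by planar string diagrams modulo planar isotopy and a short list of local relations, equipped with a degree $1$ differential $d$ that is a derivation for vertical composition. There will be a single generating $1$-morphism $\E$ together with the monoidal unit $\mathbf 1$, so the objects are words $\E^{\ot n}$ and side-by-side horizontal juxtaposition of diagrams supplies the candidate monoidal bifunctor $\ot$. I would then take $\ci$ to be the DG category of one-sided twisted complexes built from these generating objects (equivalently, a category of perfect DG $A$-modules). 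Since any category of twisted complexes is pretriangulated, the Bondal--Kapranov machinery shows that the $0$th cohomology category $H(\ci)$ is triangulated, which is the first assertion of Theorem~\ref{thm1}.

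For the Grothendieck group I would single out two indecomposable projective-type objects $P_{\mathbf 1}$ and $P_{\E}$ and compute, directly from the diagrammatics, the cohomology of the Hom-complexes $\Hom_{\ci}(P_a[m],P_b[n])$ for $a,b\in\{\mathbf 1,\E\}$ and $m,n\in\Z$. The differential should be arranged so that these cohomologies are as small as possible---concentrated in a single degree, with no unexpected self-extensions---which lets me define a rank (Euler characteristic) function on $H(\ci)$ valued in $\Z^2$ showing that $[P_{\mathbf 1}],[P_{\E}]$ form a free $\Z$-basis of $K_0(\ci)$ and that every object of $H(\ci)$ is built from shifts of $P_{\mathbf 1},P_{\E}$ by finitely many cones. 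In parallel I would extend the juxtaposition bifunctor to twisted complexes, check the pentagon and triangle axioms (routine from planar isotopy), verify that $P_{\mathbf 1}$ is the $\ot$-unit, and note that $\ot$ descends to a commutative unital ring structure on $K_0(\ci)$, commutativity being forced because the generating objects $\E^{\ot n}$ form a commutative monoid.

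The crux is the relation $[P_{\E}]^2=-[P_{\mathbf 1}]$. I would choose the differential on the generating diagrams precisely so that the twisted complex computing $P_{\E}\ot P_{\E}$, after adjoining a copy of $P_{\mathbf 1}$ placed in two adjacent homological degrees and glued by an explicit diagram, becomes acyclic; equivalently $P_{\E}\ot P_{\E}\simeq P_{\mathbf 1}[1]$ in $H(\ci)$. On $K_0$ this reads $[P_{\E}]^2=[P_{\mathbf 1}[1]]=-[P_{\mathbf 1}]$, so the ring homomorphism $\Z[x]/(x^2+1)\to K_0(\ci)$, $x\mapsto[P_{\E}]$, is well defined; combined with the rank computation of the previous paragraph it is an isomorphism, and under $x\mapsto\im$ we obtain $\Z[\im]\cong K_0(\ci)$ as rings.

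The step I expect to be the main obstacle is the Hom-complex cohomology computation that pins $K_0(\ci)$ down exactly. One has to show the diagrammatic differential is ``maximally exact'', so that no higher extensions among the $P_a$ survive into cohomology; otherwise $K_0(\ci)$ could be strictly larger than $\Z[\im]$. Making this work forces the local relations and the formula for $d$ to be chosen very carefully, and verifying it will likely require producing an explicit normal form (basis) for the spaces of string diagrams rather than any soft argument.
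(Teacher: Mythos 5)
Your construction phase matches the paper: a diagrammatic monoidal DG category on one generator, passage to one-sided twisted complexes \`a la Bondal--Kapranov so that $H(\ci)$ is triangulated, the isomorphism $Q^{\ot 2}\cong \mathbf{1}[1]$, and the resulting surjective ring map $\Z[\im]\ra K_0(\ci)$, $\im\mapsto[Q]$. (A minor difference: in the paper this isomorphism already holds in the additive diagrammatic category $\ci'$, where the differential on morphism spaces is zero and $cup$, $cap$ are mutually inverse by the handle-slide and loop relations; no acyclicity of an adjoined two-term complex is needed, and no nonzero differential on diagrams is ever introduced.)

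The genuine gap is in your plan for the injectivity step, i.e.\ showing $[\mathbf 1]$ and $[Q]$ are $\Z$-linearly independent so that $K_0(\ci)$ is all of $\Z[\im]$ rather than a proper quotient. You propose to compute the cohomology of the Hom-complexes between the generating objects, arrange the differential so that it is ``maximally exact'' and concentrated in single degrees, and then read off a rank function valued in $\Z^2$. In the category at hand this cannot work: the half strand $hf$ and its mate $\overline{hf}$ produce the degree-$1$ endomorphism $\alpha_0=\overline{hf}\circ hf$ of the unit, and $\op{End}(\mathbf 1)\cong\F[\alpha_0]$ with zero differential (Lemma~\ref{bases}), so the graded Hom spaces are infinite-dimensional, spread over all nonnegative degrees, and no Euler-characteristic or rank function on $H(\ci)$ is defined; you cannot simply ``choose the differential'' to kill these endomorphisms without altering the relations, and it is not clear any such choice coexists with $Q^{\ot2}\cong\mathbf 1[1]$. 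The paper's way around this---the idea missing from your proposal---is to prove injectivity externally: it constructs the categorical action of $\ci$ on $DGP(R)$ for the finite-dimensional quiver algebra $R$, where $K_0(DGP(R))\cong\Z\lan x,y\ran$ is pinned down by dimension vectors, checks that the induced action on Grothendieck groups is the standard one ($\im\cdot x=y$, $\im\cdot y=-x$ via $M\ot_R P(x)\cong P(y)$ and $M\ot_R P(y)\cong P(x)[1]$), and uses faithfulness of the $\Z[\im]$-action on $\Z^2$ to conclude that $\gamma$ is injective. Without this, or some substitute decategorification target with finite-dimensional Hom spaces, your argument does not determine $K_0(\ci)$.
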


\begin{thm} \label{thm2}
There exists a DG category $DGP(R)$ generated by some projective DG modules over a DG algebra $R$ such that $H(DGP(R))$ is triangulated.
The Grothendieck group $K_0(DGP(R))$ is isomorphic to $\Z^2$.
Moreover, there is a categorical action
$\eta: \ci \times DGP(R) \ra DGP(R)$ which lifts the action of $\Z[\im]$ on $\Z^2$.
\end{thm}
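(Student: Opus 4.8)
The plan is to realize $DGP(R)$ as a category of left DG modules over a small DG algebra $R$, designed so that the generating object $i$ of $\ci$ acts on it by a ``rotation'' quasi-equivalence. For $R$ I take a DG algebra with two orthogonal idempotents $e_x,e_y$ satisfying $e_x+e_y=1$, together with a differential chosen so that $H(R)$ is as small as possible (in the most economical version $R=\F\,e_x\oplus\F\,e_y$ with zero differential; a non-formal $R$ with extra generators providing homotopies may be needed so that the action described below can be made strict rather than only homotopy-coherent). Put $P_x:=Re_x$ and $P_y:=Re_y$, and let $DGP(R)$ be the smallest full DG subcategory of left DG $R$-modules that contains $P_x,P_y$ and is closed under cohomological shifts, mapping cones, and homotopy retracts. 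Then $DGP(R)$ is pretriangulated, so $H(DGP(R))$ is a full triangulated subcategory of the derived category $D(R)$, which gives the first assertion. A computation of $H(R)$ shows that $P_x,P_y$ are the indecomposable objects and that their classes satisfy no relations, so $K_0(DGP(R))=\Z[P_x]\oplus\Z[P_y]\cong\Z^2$ under $[P_x]\mapsto x$, $[P_y]\mapsto y$.

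For the action, let $B$ be the DG $(R,R)$-bimodule with $e_yBe_x=\F$ in cohomological degree $0$, $e_xBe_y=\F$ in cohomological degree $-1$, and $e_xBe_x=e_yBe_y=0$, with the $R$-actions and differential inherited from $R$. A direct computation of the tensor product over $R$ gives $B\ot_R B\cong R[1]$ as DG bimodules; in particular $B\ot_R P_x\simeq P_y$ and $B\ot_R P_y\simeq P_x[1]$, so $B\ot_R-$ is a self-quasi-equivalence of $DGP(R)$ whose square is the shift $[1]$. I then define a monoidal DG functor $\Phi$ from $\ci$ to the DG category of DG $(R,R)$-bimodules by $\Phi(\mf{1})=R$ and $\Phi(i)=B$, extended monoidally on objects, and on the generating morphisms of the graphical calculus of $\ci$ (the cups, caps, dots and trivalent vertices from the construction behind Theorem~\ref{thm1}) by the corresponding maps among tensor powers of $B$ and the isomorphism $B^{\ot 2}\cong R[1]$. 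Setting $\eta(X,N):=\Phi(X)\ot_R N$ yields, by functoriality of $\ot_R$, a bifunctor $\eta\colon\ci\times DGP(R)\ra DGP(R)$. On Grothendieck groups it sends $[i]\cdot[P_x]=[B\ot_R P_x]=[P_y]$ and $[i]\cdot[P_y]=[B\ot_R P_y]=[P_x[1]]=-[P_x]$, with $[\mf{1}]$ acting as the identity; under $[P_x]\leftrightarrow x$, $[P_y]\leftrightarrow y$ this is exactly $\im\cdot x=y$ and $\im\cdot y=-x$, so $\eta$ lifts the action of $\Z[\im]$ on $\Z^2$.

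The one remaining point --- and the step I expect to be the main obstacle --- is to check that $\Phi$ is well defined: every defining relation of the graphical calculus of $\ci$, and the compatibility of $\Phi$ with the differentials on the morphism complexes of $\ci$ (the Leibniz rule), must be verified after applying $\Phi$. This unwinds to a finite list of identities among explicit maps between the small complexes $B^{\ot n}$, together with careful bookkeeping of signs and cohomological degrees against the chosen differential of $R$; I expect this verification, rather than any conceptual difficulty, to be the bulk of the argument, and it is also what may dictate how elaborate $R$ has to be. Once it is in place, the triangulated structure of $D(R)$ and the functoriality of $\ot_R$ supply the remaining associativity and unit coherences of the module-category structure automatically.
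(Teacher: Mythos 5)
Your overall architecture matches the paper's (a two-idempotent algebra $R$, projectives $P(x),P(y)$ generating a pretriangulated $DGP(R)$ with $K_0\cong\Z^2$, a bimodule with $M\ot_R M\cong R[1]$, $M\ot_R P(x)\cong P(y)$, $M\ot_R P(y)\cong P(x)[1]$, and a monoidal functor from $\ci$ to bimodules inducing the $\Z[\im]$-action on $K_0$), but there is a genuine gap exactly where you flag one: you never define the functor on the generating morphisms of $\ci$ and never verify the relations, and this verification is not routine bookkeeping --- it is the actual content of the theorem and it constrains the choice of $R$. The generating morphisms of $\ci$ are the cup, the cap, and the half strand $hf\in\Hom_{\ci}(Q^0,Q)$ (there are no dots or trivalent vertices), and the half strand is the sticking point: with your ``most economical'' choice $R=\F e_x\oplus\F e_y$ and the two-dimensional bimodule $B$ with $e_xBe_x=e_yBe_y=0$, a degree-zero bimodule map $R\to B$ must send $e_x$ into $e_xBe_x=0$ and $e_y$ into $e_yBe_y=0$, so the only possible image of $hf$ is zero. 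Thus the semisimple model cannot carry the half strand in any nondegenerate way, and you have not checked whether any assignment (zero or otherwise) is compatible with the relations (R1)--(R4); saying that a ``non-formal $R$ with extra generators may be needed'' is precisely the unsolved part of the problem, not a detail to be postponed.

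The paper's proof supplies exactly this missing data: $R$ is the six-dimensional quotient of the Khovanov--Seidel quiver algebra (generators $a,b$ with $aba=bab=0$ and $\op{gr}(b)=1$, trivial differential), the bimodule is $M=P(y)\oplus P(x)[1]$ with right action ${}_ym_x\cdot a=b\cdot{}_xm_y$, ${}_xm_y\cdot b=a\cdot{}_ym_x$, the cup and cap go to the explicit isomorphism $f\colon R[1]\to M\ot_R M$ and its inverse, and the half strand goes to the nonzero degree-zero map $g\colon R\to M$, $e(x)\mapsto a\cdot{}_ym_x$, $e(y)\mapsto b\cdot{}_xm_y$; the relations (R1-b), (R2), (R3), (R4) are then checked by short computations on the generators ${}_xm_y,{}_ym_x$ (there is no Leibniz-rule issue, since the differentials on $R$, $M$ and the morphism complexes of $\ci'$ are trivial). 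Your $K_0$ computations and the decategorification step are fine as stated, but until you commit to an $R$ large enough to admit the image of the half strand and carry out the relation check, the categorical action $\eta$ has not been constructed.
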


The DG algebra $R$ is a quotient of a quiver algebra of the Khovanov-Seidel quiver \cite{KhS} with two vertices.
The differential on $R$ is trivial.
Recall that the quiver algebra $R$ is motivated from the Fukaya category $\cal{F}(T^2)$ of the torus.
In particular, the category $DGP(R)$ is related to a subcategory of $\cal{F}(T^2)$ which is generated by two Lagrangians of slopes $0$ and $\infty$ on $T^2$.
The automorphism $Q$ of $DGP(R)$ which lifts the multiplication with $\im$ on $K_0(DGP(R))$ can be visualized as a rotation of $\frac{\pi}{2}$ around the intersection point of the two Lagrangians.

A natural question for us is to find other categories which admit categorical actions of $\ci$.

\vspace{.2cm}
The organization of the paper is the following.
In Section 2 we define the monoidal DG category $\ci$ and show that there is a surjective ring homomorphism $\Z[\im] \twoheadrightarrow K_0(\ci)$.
In Section 3 we define the DG category $DGP(R)$ and show that $K_0(DGP(R))$ is isomorphic to $\Z^2$.
Then we construct the categorical action of $\ci$ on $DGP(R)$ via DG $R$-bimodules.

\vspace{.2cm}
\noindent {\bf Acknowledgements:}
I would like to thank Mikhail Khovanov for many helpful conversations.

\section{The monoidal DG category $\ci$}
We define $\ci$ in two steps.
In Section 2.1 we construct an additive monoidal DG category $\ci'$ using diagrams.
In Section 2.2 we enlarge $\ci'$ to the category $\ci$ of {\em one-sided twisted complexes} over $\ci'$.
Then we discuss the Grothendieck group $K_0(\ci)$ and prove the following.

\begin{prop}\label{K0ci}
There exists a monoidal DG category $\ci$ such that $H(\ci)$ is triangulated.
Moreover, there is a surjective ring homomorphism $\gamma: \Z[\im] \twoheadrightarrow K_0(\ci)$.
\end{prop}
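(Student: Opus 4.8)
The plan is to build $\ci$ in the two stages announced: first the small additive monoidal DG category $\ci'$ with objects generated by a single generating object (and its shifts), defined by generators-and-relations on morphism spaces via the promised graphical calculus, and then its closure under one-sided twisted complexes. The triangulated structure on $H(\ci)$ comes for free from the general theory of twisted complexes over a DG category (Bondal--Kapranov): once $\ci$ is defined as such a category of twisted complexes, $H(\ci)$ is automatically a triangulated category, with the shift $[1]$ and the cones built in. So the real content is (i) checking that $\ot$ on $\ci'$ is well-defined on morphisms (respects the diagrammatic relations) and is a DG bifunctor, i.e. compatible with the differential and composition, (ii) extending $\ot$ to twisted complexes, which is formal once (i) holds, and (iii) identifying enough of $K_0(\ci)$ to produce the surjection $\gamma$.

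For step (iii) I would proceed as follows. Let $\mathbf{1}$ denote the monoidal unit of $\ci$ and let $Q$ denote the distinguished generating object that is meant to categorify $\im$. Since $H(\ci)$ is triangulated, $K_0(\ci)$ is a ring with unit $[\mathbf{1}]$, and $\ot$ descends to its multiplication. I need two things about the class $q:=[Q]$: first, a relation $q^2 = -[\mathbf{1}]$ in $K_0(\ci)$, which should come from exhibiting an explicit one-sided twisted complex built from copies of $Q\ot Q$ and shifts of $\mathbf{1}$ that is contractible in $H(\ci)$ — equivalently, a distinguished triangle realizing $Q\ot Q \simeq \mathbf{1}[1]$ (or $\mathbf{1}[-1]$), so that $q^2 = -[\mathbf{1}]$; and second, that $[\mathbf{1}]$ and $q$ generate $K_0(\ci)$ as a group, because the objects of $\ci'$ are direct sums of shifts of tensor powers of $Q$, and in the triangulated hull every object of $\ci$ is built from these by cones, so its $K_0$-class is a $\Z$-combination of $q^n=[\mathbf{1}],\ q,\ -[\mathbf{1}],\ -q$ — i.e. lies in $\Z[\mathbf{1}]+\Z q$. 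Together these give a well-defined surjective ring map $\gamma:\Z[\im]\to K_0(\ci)$ sending $1\mapsto[\mathbf{1}]$ and $\im\mapsto q$, which is the assertion. (Note that only surjectivity is claimed here; injectivity — the statement $K_0(\ci)\cong\Z[\im]$ of Theorem \ref{thm1} — is presumably deferred and will use the categorical action of Theorem \ref{thm2} to detect that $[\mathbf{1}]$ and $q$ are independent.)

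The main obstacle I anticipate is step (i): verifying that the tensor bifunctor on $\ci'$ is well-defined, i.e. that the graphical relations defining the morphism spaces are preserved under horizontal juxtaposition of diagrams, and that the Leibniz rule for the differential is compatible with this juxtaposition and with vertical composition. This is the kind of check that is conceptually routine but technically delicate, since it must be done relation-by-relation on the generators of the morphism spaces; any sign or grading inconsistency here would be fatal. A secondary subtlety is the passage to one-sided twisted complexes: one must confirm that the "one-sided" (upper-triangular) condition is stable under $\ot$ and that the Maurer--Cartan / twisted-differential condition is preserved, so that $\ot$ genuinely lands in $\ci$ and not merely in the larger category of all twisted complexes. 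Once these are in place, the contractible complex witnessing $q^2=-[\mathbf{1}]$ is the last diagrammatic input needed, and the rest of the argument for $\gamma$ is formal.
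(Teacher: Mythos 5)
Your proposal matches the paper's proof in essentially every respect: $\ci$ is obtained as one-sided twisted complexes over the diagrammatic category $\ci'$, the triangulated structure on $H(\ci)$ is quoted from Bondal--Kapranov, and $\gamma$ is defined by $\im\mapsto[Q]$, with surjectivity following because every object of $\ci$ is built from shifts of tensor powers of $Q$ and $[Q]^2=-[\mathbf{1}]$, while injectivity is deferred to the categorical action of Section 3. The only cosmetic difference is that the paper does not need a contractible twisted complex or distinguished triangle to get $[Q]^2=-[\mathbf{1}]$: the cup and cap are already mutually inverse morphisms in $\ci'$ by the handle-slide and loop relations, giving an honest isomorphism $Q^2\cong Q^0[1]$ before passing to the triangulated hull.
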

We will show that $\gamma$ is actually an isomorphism via the categorical action of $\ci$ in Section 3.

\subsection{The category $\ci'$}
In this subsection we define the additive DG category $\ci'$ whose morphism sets are cochain complexes of $\F$-vector spaces with trivial differential.
In other words, the morphism sets are just $\Z$-graded $\F$-vector spaces.
We fix $\F$ as the ground field for morphisms throughout the paper.
We expect some difficulties in keeping track of signs when generalizing the ground field from $\F$ to $\Z$.

\vspace{.2cm}
\n $\bullet$ {\bf Objects:} the set of elementary objects $\E(\ci')$ of $\ci'$ consists of nonnegative products $\{Q^k; k\geq 0\}$ of a single generator $Q$.
The unit object $\mf{1}$ corresponds to $Q^0$.
Let $Q^k[m]$ denote an object $Q^k$ with a cohomological grading shifted by $m \in \Z$.
In general, an object of $\ci'$ is a formal direct sum $\bigoplus_{i=1}^{n}Q^{k_i}[m_i]$.

\vspace{.2cm}
\n $\bullet$ {\bf Morphisms:} a morphism set $\Hom_{\ci'}(Q^k, Q^l)$ is a $\Z$-graded $\F$-vector spaces generated by a set $D(k,l)$ of planar diagrams from $k$ points to $l$ points, modulo local relations.
Morphisms extend linearly to formal direct sums.
The composition of morphisms is given by stacking diagrams vertically.
A vertical stacking of two diagrams is defined to be zero if their endpoints do not match.
The monoidal functor on the elementary objects and morphisms is given by the horizontal composition.
Throughout this subsection, we use $\Hom$ to denote $\Hom_{\ci'}$ for simplicity.

\vspace{.2cm}
\n $\bullet$ {\bf Diagrams:} any diagram $f \in D(k,l)$ is obtained by vertically stacking finitely many {\em generating diagrams} in the strip $\R \times [0,1]$ such that $f \cap (\R \times \{0\})=\{1, \dots, k\} \times \{0\}$ and $f \cap (\R \times \{1\})=\{1, \dots, l\} \times \{1\}$.
All diagrams are read from bottom to top as morphisms.
Each generating diagram is a horizontal composition of one {\em elementary diagram} with some trivial vertical strands.
The elementary diagrams consist of $4$ types as shown in Figure \ref{1}:
\be
\item a vertical strand $id_Q \in \Hom(Q,Q)$;
\item a cup $cup \in \Hom(Q^0, Q^2)$;
\item a cap $cap \in \Hom(Q^2, Q^0)$;
\item a half strand $hf \in \Hom(Q^0, Q)$.
\ee
\begin{figure}[h]
\begin{overpic}
[scale=0.23]{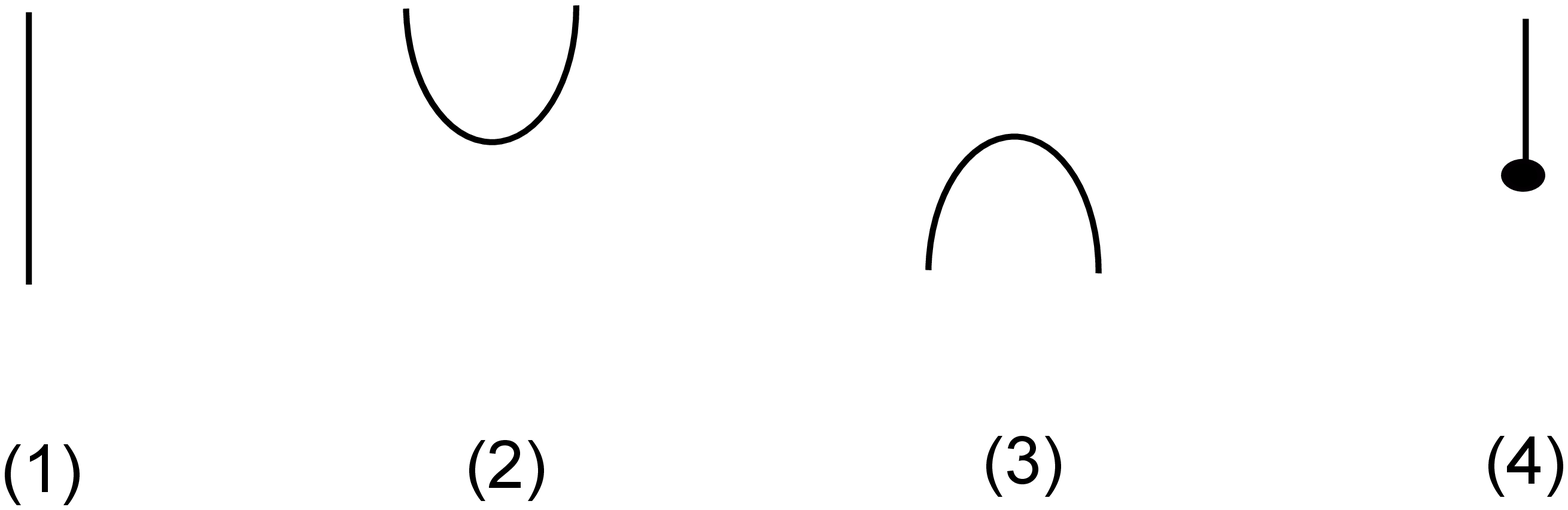}
\end{overpic}
\caption{The elementary diagrams of $\ci'$}
\label{1}
\end{figure}

\n $\bullet$ {\bf Local relations:}
the relations consist of 4 groups:

\vspace{.1cm}
\n {\bf (R1)} Isotopy relation:
\begin{description}
\item[(R1-a)] vertical strands as idempotents;
\item[(R1-b)] isotopy of a single strand;
\item[(R1-c)] isotopy of disjoint diagrams.
\end{description}

\vspace{.1cm}
\n {\bf (R2)} Handle slides relation.

\vspace{.1cm}
\n {\bf (R3)} Loop relation: a loop equals the empty diagram, i.e., the identity $id_{Q^0} \in \Hom(Q^0, Q^0)$.

\vspace{.1cm}
\n {\bf (R2)} Commutativity of a half strand: a half strand commutes with a trivial strand.
\begin{figure}[h]
\begin{overpic}
[scale=0.22]{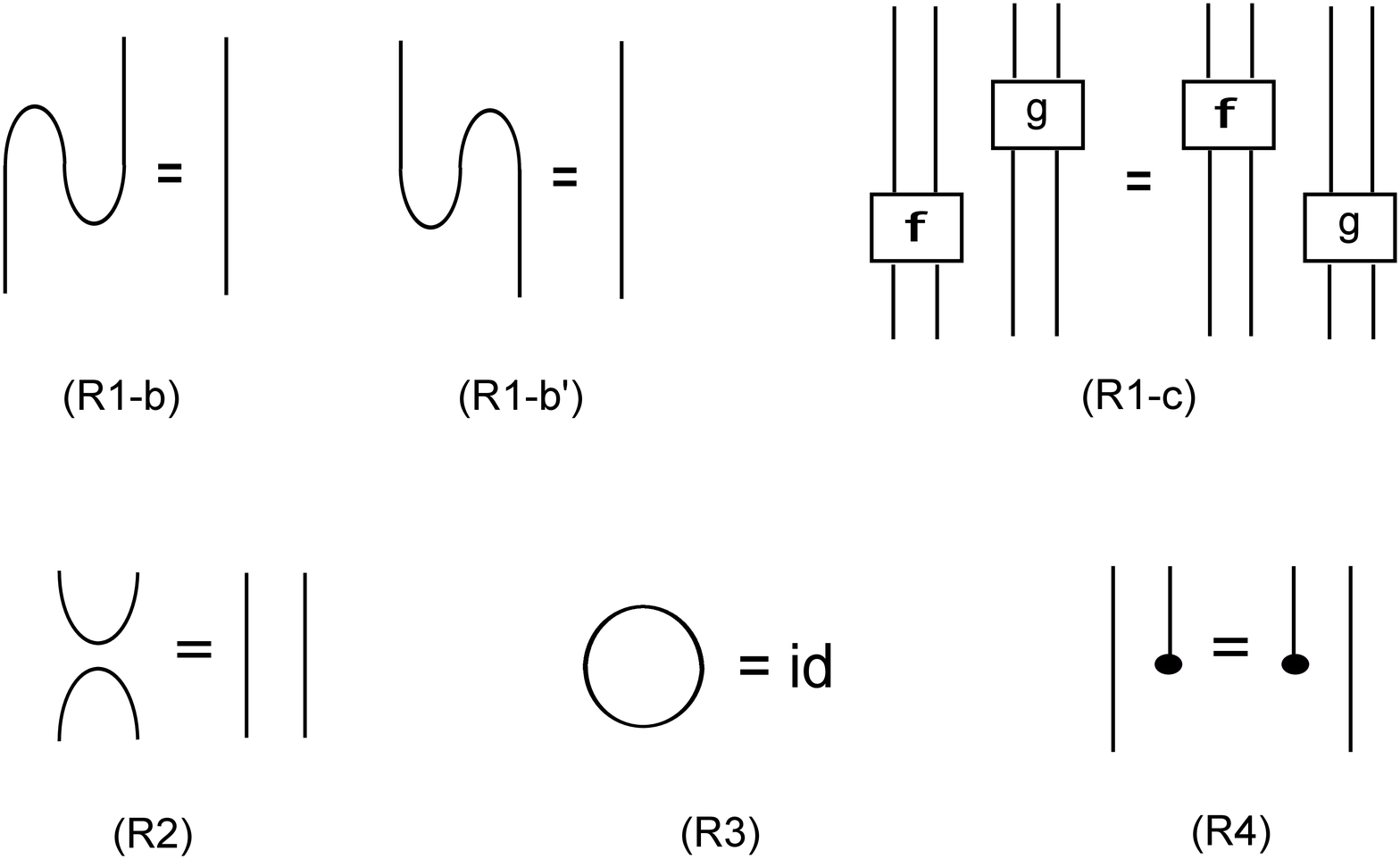}
\end{overpic}
\caption{Local relations.}
\label{2}
\end{figure}

\vspace{.2cm}
\n $\bullet$ {\bf Cohomological grading:}
A cohomological grading $\op{gr}$ is defined on the elementary diagrams by:
$$\op{gr}(id_Q)=\op{gr}(hf)=0, \quad \op{gr}(cap)=1, \quad \op{gr}(cup)=-1.$$
Two sides of any relation have the same grading.
Hence $\op{gr}$ defines a grading on the morphism sets.
Let $\Hom(Q^k, Q^l)=\bigoplus_i\Hom^i(Q^k, Q^l)$ be the decomposition according to the grading.

\begin{rmk}
The morphisms are actually generated by diagrams up to isotopy relative to boundary by (R1).
In particular, the object $Q[-1]$ is bi-adjoint to $Q$.
\end{rmk}

\n $\bullet$ {\bf More relations:} we deduce more relations from the defining relations.
\begin{lemma} \label{morerel}
(1) The relations (R1-b) and (R1-b') are equivalent.

\n (2) a cup or a cap commutes with a trivial strand.
\end{lemma}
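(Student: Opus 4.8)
The plan is to prove both parts by diagrammatic manipulation, using only the handle slide relation (R2), the loop relation (R3), and the isotopy relations (R1-a), (R1-c) (the last two together yielding the interchange law for horizontal and vertical composition).

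For part (1), I first observe that the left--right reflection of the strip $\R\times[0,1]$ carries each generating diagram $id_Q$, $cup$, $cap$, $hf$ to itself and preserves each of the relations (R1-a), (R1-c), (R2), (R3), while it interchanges (R1-b) and (R1-b'). Hence it suffices to derive (R1-b') from (R1-b) together with the remaining relations, since the reverse implication is the mirror image of that argument. The most direct route I would try is to show that the two left-hand sides --- the two single-strand zig-zags --- are related by a single application of the handle slide relation (R2) (the right-hand sides both being the trivial strand), so that (R1-b) holds if and only if (R1-b') does. Failing an immediate match, I would argue as follows: replace a trivial segment of the strand in (R1-b') by the equal zig-zag provided by (R1-b), so the diagram now contains two bends; use (R2) to slide one bend past the other; then use (R1-c) to separate them, leaving a single bend that (R1-b) straightens to a trivial strand. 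The only bookkeeping is to check at each step that endpoints match and that cohomological gradings agree, both of which are automatic since every step is an instance of a relation.

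For part (2), the point is that the handle slide relation (R2) is precisely what allows a trivial strand to pass a cup or a cap. I would prove $cup \ot id_Q = id_Q \ot cup$ in $\Hom^{-1}(Q, Q^3)$ by applying (R2) to the cup together with the adjacent vertical strand; should this produce additional terms, I would argue that they must vanish using the loop relation (R3) and the cohomological grading --- since there is no crossing among the generators, the space $\Hom^{-1}(Q, Q^3)$ is spanned by only a handful of diagrams, which pins the correction terms down. The corresponding statement for a cap follows either from the same argument applied to a cap, or, once part (1) is in hand, by taking the mate of the cup identity under the bi-adjunction of $Q$ and $Q[-1]$ noted in the Remark above (equivalently, by reflecting the cup identity top-to-bottom).

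The step I expect to be the main obstacle is the derivation in part (1): although the two single-strand isotopies plainly ``look the same'', passing between them genuinely uses (R2) and (R3), and the delicate point is to choose where to insert the auxiliary bend so that the handle slide and disjoint isotopy bring the diagram into a form to which (R1-b) re-applies, rather than into an ever more complicated one. Once (1) is settled, part (2) is short, its only subtlety being the verification that the handle slide leaves behind no surviving correction terms.
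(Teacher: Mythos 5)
The central difficulty is that you never actually carry out the diagrammatic derivations that are the entire content of this lemma (the paper's proof is precisely such an explicit manipulation, displayed in its Figure \ref{3}). For part (1) you offer two contingent routes (``the most direct route I would try\dots'', ``failing an immediate match\dots'') and you yourself flag the decisive step --- where to insert the auxiliary bend and why (R2) applies there --- as unresolved, so nothing is proved. Moreover your primary route, that the two single-strand zig-zags differ by one application of (R2), misreads what (R2) supplies: as the paper uses it in Lemma \ref{isoQ}, the role of (R2) together with the loop relation (R3) is to make $cup$ and $cap$ mutually inverse, i.e.\ $cup \circ cap = id_{Q^2}$ and $cap \circ cup = id_{Q^0}$, not to equate one zig-zag with the other. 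A complete argument in the spirit you are after is short: set $Z=(cap \ot id_Q)\circ (id_Q \ot cup)$ and $Z'=(id_Q \ot cap)\circ (cup \ot id_Q)$; then $Z'\circ Z=(id_Q\ot cap)\circ\bigl((cup\circ cap)\ot id_Q\bigr)\circ (id_Q\ot cup)=id_Q\ot(cap\circ cup)=id_Q$ by (R2) and (R3), and symmetrically $Z\circ Z'=id_Q$, so $Z=id_Q$ if and only if $Z'=id_Q$, which is exactly the equivalence of (R1-b) and (R1-b'). This also removes the need for your mirror-symmetry reduction, which silently assumes that the unspecified relation (R2) and the half-strand relation (R4) are reflection-invariant, something you cannot check from the statement of the lemma alone.

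For part (2) your reasoning reveals two further problems. First, the local relations are equalities of single diagrams, so applying (R2) cannot ``produce additional terms''; there is nothing to kill with (R3) or with grading constraints. Second, the appeal to $\Hom^{-1}(Q,Q^3)$ being spanned by ``a handful of diagrams'' is circular at this point in the paper: the basis result (Lemma \ref{bases}) is proved afterwards and relies on the present lemma via the statement that any nontrivial diagram commutes with a trivial strand. The correct short argument is parallel to part (1): by (R2) and (R3) the morphism $cap\ot id_Q:Q^3\to Q[1]$ is invertible with inverse $cup\ot id_Q$; since $(cap\ot id_Q)\circ(cup\ot id_Q)=(cap\circ cup)\ot id_Q=id_Q$ by (R3) and $(cap\ot id_Q)\circ(id_Q\ot cup)=Z=id_Q$ by (R1-b), composing with the inverse gives $cup\ot id_Q=id_Q\ot cup$, and the cap case is the same computation using (R1-b') obtained in part (1). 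Your alternative of taking mates under the bi-adjunction of $Q$ and $Q[-1]$ is acceptable only after part (1) is established, since that bi-adjointness already presupposes both zig-zag relations.
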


\begin{proof}
We prove that (R1-b) implies (R1-b') and a cup commutes with a trivial strand as in Figure \ref{3}.
The verification of the other relations is similar.
\begin{figure}[h]
\begin{overpic}
[scale=0.25]{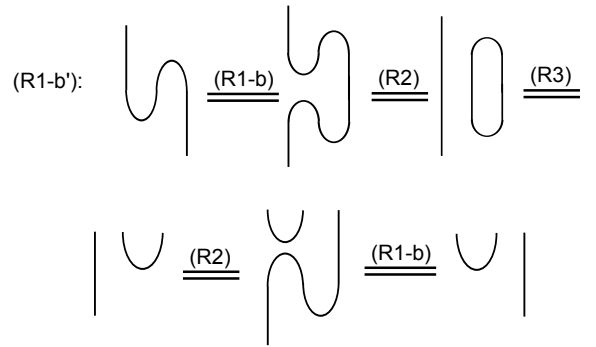}
\end{overpic}
\caption{More relations}
\label{3}
\end{figure}
\end{proof}

\vspace{.2cm}
\n $\bullet$ {\bf The key isomorphism:}
Both morphisms $cup \in \Hom(Q^0[1], Q^2)$ and $cap \in \Hom(Q^2, Q^0[1])$ have cohomological grading $0$.
Moreover, they are inverse to each other by (R2) and (R3). So we have the following.
\begin{lemma} \label{isoQ}
The isomorphism $Q^2 \cong Q^0[1]$ holds in $\ci'$.
\end{lemma}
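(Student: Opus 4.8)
The plan is to produce mutually inverse morphisms of cohomological degree $0$ between $Q^2$ and $Q^0[1]$, the obvious candidates being $cap$ and $cup$ themselves after the shift. As observed just above the statement, viewing $cup\in\Hom^{-1}(Q^0,Q^2)$ as a morphism $Q^0[1]\ra Q^2$ and $cap\in\Hom^{1}(Q^2,Q^0)$ as a morphism $Q^2\ra Q^0[1]$ makes both homogeneous of degree $0$, and since every morphism complex of $\ci'$ carries the trivial differential these maps are automatically cocycles. Hence it suffices to verify the two identities $cap\circ cup = id_{Q^0}$ and $cup\circ cap = id_{Q^2}$ in $\ci'$, where $\circ$ denotes vertical stacking read from bottom to top.

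First I would treat $cap\circ cup\in\Hom(Q^0,Q^0)$: stacking the cap above the cup glues the two top endpoints of $cup$ to the two bottom endpoints of $cap$, so the resulting diagram has no free endpoints and consists of a single closed loop. By the loop relation (R3) this loop equals the empty diagram, i.e.\ $id_{Q^0}$; the gradings match since $\op{gr}(cup)+\op{gr}(cap)=-1+1=0$. Next I would treat $cup\circ cap\in\Hom(Q^2,Q^2)$, the ``handle'' obtained by placing a cap on the two bottom points and a cup on the two top points. Here I would invoke the handle slide relation (R2) of Figure~\ref{2} — possibly together with (R3) to discard a resulting closed component — to rewrite this handle as the identity diagram on two parallel vertical strands, namely $id_{Q^2}$.

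Combining the two identities shows that $cap$ and $cup$ are inverse degree-$0$ isomorphisms, proving $Q^2\cong Q^0[1]$ in $\ci'$. I expect the first identity to be immediate from (R3); the only substantive step is the second, since without (R2) the composite $cup\circ cap$ would merely be an idempotent exhibiting $Q^0[1]$ as a direct summand of $Q^2$, and it is precisely the handle slide relation that upgrades this retraction to a genuine isomorphism. So the crux of the argument is checking that the explicit form of (R2) collapses the handle diagram to $id_{Q^2}$.
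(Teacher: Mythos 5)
Your proposal is correct and follows the paper's own argument: the paper likewise regards $cup$ and $cap$ as degree-$0$ morphisms between $Q^0[1]$ and $Q^2$ and notes that they are mutually inverse, using (R3) for the closed loop $cap\circ cup=id_{Q^0}$ and the handle slide relation (R2) for $cup\circ cap=id_{Q^2}$. Your identification of (R2) as the substantive step (upgrading the retraction to an isomorphism) matches exactly what the paper relies on.
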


\n $\bullet$ {\bf Bases of Hom spaces:} it suffices to give a basis for $\Hom(Q^k, Q^l)$ for $k,l \in \{0,1\}$ due to the isomorphism $Q^2 \cong Q^0[1]$.
We introduce more notations here.
Let $\overline{hf} \in \Hom(Q, Q^0)$ to denote the opposite half strand as in Figure \ref{4}.
Note that $\op{gr}(\overline{hf})=1$.
Let $\alpha_0= \overline{hf} \circ hf \in \op{End}(Q^0)$ denote a vertical composition of two half strands, and $\alpha_1 \in \op{End}(Q)$ be a horizontal composition of $\alpha_0$ and a trivial strand.
\begin{figure}[h]
\begin{overpic}
[scale=0.25]{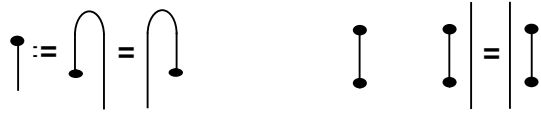}
\end{overpic}
\caption{$\overline{hf}$ is on the left and $\alpha_0, \alpha_1$ are on the right.}
\label{4}
\end{figure}

Since any nontrivial diagram commutes with a trivial strand, it is straightforward to get the following $\F$-bases of the Hom spaces.
\begin{lemma} \label{bases}
The Hom spaces $\Hom(Q^k, Q^l)$ for $k,l \in \{0,1\}$ are given by:

\n (1) $\op{End}(Q^i)=\lan \alpha_i^n ~|~ n \geq 0\ran$ for $i=0,1$;

\n (2) $\Hom(Q^0, Q)=\lan hf \circ \alpha_0^n ~|~ n \geq 0\ran=\lan \alpha_1^n \circ hf ~|~ n \geq 0 \ran$;

\n (3) $\Hom(Q, Q^0)=\lan \overline{hf} \circ \alpha_1^n ~|~ n \geq 0\ran=\lan \alpha_0^n \circ \overline{hf} ~|~ n \geq 0 \ran$.
\end{lemma}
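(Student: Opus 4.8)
The plan is to argue in two stages: first reduce an arbitrary diagram to a ``normal form'' built only from the generators $hf$, $\overline{hf}$, $id_Q$, $cup$, $cap$, and second show that in the relevant cases $k,l\in\{0,1\}$ the only surviving normal forms are powers of $\alpha_i$ composed with at most one half strand. For the first stage I would take a diagram $f\in D(k,l)$, written as a vertical stack of generating diagrams, and push all $cup$'s downward and all $cap$'s upward using (R1-c) (isotopy of disjoint diagrams) together with Lemma \ref{morerel}(2) (a cup or cap commutes with a trivial strand), and similarly slide half strands past trivial strands using the commutativity relation. The loop relation (R3) kills any closed component, and (R2) handle slides let me resolve any remaining crossing-type configuration between a cup/cap and a half strand into loops or shorter diagrams. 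After this cleanup a diagram from one point to one point can only be a stack of ``bubbles'' $\alpha_0 = \overline{hf}\circ hf$ riding on the single through-strand, i.e. $\alpha_1^n$; from $Q^0$ to $Q^0$ it is a stack of $\alpha_0$'s; and a diagram from $Q^0$ to $Q$ (resp. $Q$ to $Q^0$) must use exactly one unpaired half strand and otherwise only bubbles, giving $\alpha_1^n\circ hf = hf\circ\alpha_0^n$ (resp. $\overline{hf}\circ\alpha_1^n = \alpha_0^n\circ\overline{hf}$), where the two expressions agree because a half strand commutes with $\alpha$.

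Concretely the steps are: (i) fix $k,l\in\{0,1\}$ and show the listed elements are linearly independent --- this is immediate from the grading, since $\op{gr}(\alpha_i^n)=n$ and the elements in each family have pairwise distinct cohomological degrees, so no nontrivial $\F$-linear relation among them is possible; (ii) show the listed elements span, by the reduction described above; (iii) verify the two presentations in parts (2) and (3) coincide, which follows from the half-strand commutativity relation (sliding $hf$ through a tower of $\alpha_0$'s turns it into the same tower of $\alpha_1$'s on the other side). The isomorphism $Q^2\cong Q^0[1]$ of Lemma \ref{isoQ} is what justifies restricting to $k,l\in\{0,1\}$: any $Q^k$ with $k\geq 2$ is, up to shift, a sum of copies of $\mf 1$ and $Q$, so all Hom spaces are determined by these four.

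The main obstacle is step (ii), the spanning/normal-form argument: I need to be confident that the relations (R1)--(R3) really do let me remove \emph{every} cup--cap pair and every closed loop that can appear when two diagrams are stacked, so that nothing more exotic than a tower of bubbles survives. The delicate point is bookkeeping the handle-slide relation (R2) and the isotopy relations carefully enough to guarantee confluence --- that the reduction terminates in a unique normal form regardless of the order in which one applies the moves. In the diagrammatic setting this is the standard ``diagrams span, now count'' difficulty; here it is mitigated by the fact that once we are down to $k,l\leq 1$ there is essentially only one topological type of reduced diagram in each degree, so the argument the author invokes with ``it is straightforward'' is really the observation that \emph{any nontrivial diagram commutes with a trivial strand}, which collapses the combinatorics entirely.
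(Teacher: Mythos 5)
Your spanning argument (stage (ii)) is essentially the paper's own justification spelled out: the paper gives only the one sentence that any nontrivial diagram commutes with a trivial strand, and your normal-form reduction, together with the reduction to $k,l\in\{0,1\}$ via Lemma \ref{isoQ}, is exactly that observation made explicit (one small quibble: $Q^k$ is a single shifted copy of $Q^0$ or $Q$, namely $Q^{k\bmod 2}[\lfloor k/2\rfloor]$, not a direct sum of copies).

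The genuine gap is in your step (i). Having pairwise distinct cohomological degrees makes the elements $\alpha_i^n$ linearly independent only once you know each of them is \emph{nonzero} in the quotient of the diagram space by the local relations, and that nonvanishing is precisely the nontrivial half of any basis theorem for a diagrammatically presented category: a priori the relations (R1)--(R4) could collapse $\alpha_i^n$ to $0$ for some $n$, and the grading says nothing about this. Nor can the gap be closed by the representation $\tau$ of Section 3: $\tau(\alpha_0)$ is a degree-one bimodule endomorphism of $R$, hence multiplication by a degree-one central element of $R$ (a combination of $ab$ and $ba$), and every such element squares to zero, so $\tau$ annihilates $\alpha_i^n$ for all $n\geq 2$ and cannot detect independence of the higher powers. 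What would actually close it is either a faithful-enough representation (for instance a polynomial-type module on which $\alpha_i$ acts as a variable) or a full rewriting argument: if you prove termination and confluence of your reduction moves, uniqueness of normal forms gives independence together with spanning. So the confluence issue you flag under step (ii) is in fact the load-bearing ingredient for step (i) as well; it is not ``immediate from the grading,'' and as written your independence claim does not follow.
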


In other words, $\op{End}(Q^i)$ is the polynomial algebra over one generator $\alpha_i$.

\subsection{Definition of $\ci$}
We define $\ci$ as the DG category of {\em one-sided twisted complexes} over $\ci'$ following \cite{BK}.
More precisely, an object of $\ci$ is of the form:
$$\{(\bigoplus_{i=1}^{n}\mf{a}_i, f=\sum_{i < j}f^j_i) ~|~ f^j_i \in \Hom^1_{\ci'}(\mf{a}_i,\mf{a}_j), ~\sum_k (f^j_k \circ f^k_i)=0 ~\mbox{for all}~ i,j\},$$
where $\mf{a}_i$ are objects of $\ci'$ for $1 \leq i \leq n$.

\begin{rmk}
The notion of {\em one-sided twisted complex} was introduced in \cite[Section 4, Definition 1]{BK}.
The condition in general is $\sum_k (f^j_k \circ f^k_i) + d_{\ci'}(f^j_i)=0$.
But the differential $d_{\ci'}$ is zero in our case so that we have the simplified condition as above.
The term ``one-sided" refers to the condition $f^j_i=0$ for $i\geq j$.
\end{rmk}

The morphism set $\Hom_{\ci}((\bigoplus\mf{a}_i, f=\sum f^{i'}_i), (\bigoplus\mf{b}_j, g=\sum g^{j'}_j))$ is an $\F$-vector space spanned by $\{h_i^j \in \Hom_{\ci'}(\mf{a}_i, \mf{b}_j)\}$ with a differential $d_{\ci}$:
$$d_{\ci}(h_i^j)=\sum_{i'} (h_i^j \circ f_{i'}^i) + \sum_{j'} (g^{j'}_j \circ h^j_i).$$

\begin{rmk}
The reason of enlarging $\ci'$ to $\ci$ is that the $0$th cohomology category $H(\ci)$ of $\ci$ is a triangulated category by \cite[Section 4, Proposition 2]{BK}.
In other words, we can define a cone of a morphism in $\ci$, but not in $\ci'$.
\end{rmk}

The monoidal structure on $\ci'$ extends to $\ci$ in the usual way.
We finally have a strict monoidal DG category $\ci$.

\vspace{.2cm}
\n $\bullet$ {\bf The Grothendieck group $K_0(\ci)$:} we refer to \cite{Ke} for an introduction to DG categories and their homology categories.
Let $H(\ci)$ denote the $0$th cohomology category of $\ci$, which is a triangulated category by \cite[Section 4, Proposition 2]{BK}.
Let $K_0(\ci)$ denote the Grothendieck group of $H(\ci)$.
The induced monoidal functor $\ot: H(\ci) \times H(\ci) \ra H(\ci)$ is bi-exact and hence descends to a multiplication $K_0(\ot): K_0(\ci) \times K_0(\ci) \ra K_0(\ci)$.

Let $[Q] \in K_0(\ci)$ denote the class of the generating object $Q$ of $\ci$.
Then $K_0(\ci)$ is a $\Z$-algebra generated by $[Q]$ with unit $[Q^0]$.

\begin{proof}[Proof of Proposition \ref{K0ci}]
The isomorphism $Q^2 \cong Q^0[1]$ in $\ci$ descends to $[Q]^2=-1$ in $K_0(\ci)$.
Therefore we have a ring homomorphism:
$$\begin{array}{cccc}
\gamma: & \Z[\im] & \ra & K_0(\ci) \\
& \im & \mapsto & [Q],
\end{array}$$
which is surjective.
\end{proof}
We will show that $\gamma$ is actually an isomorphism in the next section.

\section{A categorical action of $\ci$ on $DGP(R)$}
In Section 3.1, we define the DG category $DGP(R)$ which is generated by two projective DG $R$-modules and show that $K_0(DGP(R)) \cong \Z^2$.
In Section 3.2, we construct a DG $R$-bimodule $M$ and show that tensoring with $M$ over $R$ gives an endorfunctor of $DGP(R)$.
Moreover, we prove that $M \ot_R M \cong R[1]$ as $R$-bimodules.
In Section 3.3, we construct a categorical action of $\ci$ on $DGP(R)$, where the object $Q$ of $\ci$ acts on $DGP(R)$ by tensoring with $M$.
Then we show that the induced action on the Grothendieck groups is isomorphic to the action of $\Z[\im]$ on $\Z^2$.

\subsection{The category $DGP(R)$}
We define the DG $\F$-algebra $R$ as a quotient of a quiver algebra of the Khovanov-Seidel quiver with two vertices.
$$\xymatrix{
x \ar@/_1pc/[r]^{a}  & y \ar@/_1pc/[l]_{b}
}
$$

\begin{defn}
Let $R$ be a DG $\F$-algebra with two idempotents $e(x), e(y)$, two generators $a, b$ and relations:
\begin{gather*}
e(z)  e(z')=\delta_{z,z'}e(z) ~\mbox{for}~ z,z' \in \{x,y\};\\
e(x)  a =a  e(y)=a;\\
e(y)  b =b  e(x)=b;\\
a  b  a=b  a  b=0.
\end{gather*}
The differential on $R$ is trivial.
The cohomological grading is zero except $\op{gr}(b)=1$.
\end{defn}

We refer to \cite[Section 10]{BL} for an introduction to DG modules and {\em projective} DG modules.
Let $P(x)=R e(x)$ and $P(y)=R e(y)$ denote two projective DG $R$-modules.
Let $DG(R)$ denote the DG category of DG left $R$-modules.

We define $DGP(R)$ as the smallest full subcategory of $DG(R)$ which contains the projective DG $R$-modules $\{P(x), P(y)\}$ and is closed under the cohomological grading shift functor $[1]$ and taking mapping cones.
The $0$th cohomology category $H(DGP(R))$ is just the homotopy category of bounded complexes of projective modules $\{P(x), P(y)\}$.

\begin{lemma} \label{K0R}
The Grothendieck group $K_0(DGP(R))$ of $H(DGP(R))$ is isomorphic to $Z\lan x,y \ran$.
\end{lemma}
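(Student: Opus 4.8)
The plan is to compute $K_0$ of the homotopy category $H(DGP(R))$ directly from the fact that this category is equivalent to the homotopy category $K^b(\mathrm{proj}\text{-}R)$ of bounded complexes of the finitely generated projective DG $R$-modules $P(x)$ and $P(y)$. First I would recall the general principle (see e.g. \cite{Ke}) that for a finite-dimensional algebra — or here a finite-dimensional DG algebra — the Grothendieck group of the bounded homotopy category of finitely generated projectives is the split Grothendieck group of the additive category of projectives, which is the free abelian group on the isomorphism classes of indecomposable projectives. Thus the claim reduces to showing that $P(x)$ and $P(y)$ are indecomposable, non-isomorphic, and that there are no "hidden" identifications among shifts beyond those already accounted for.

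Concretely, the key steps would be, in order: (1) Observe $R = R e(x) \oplus R e(y) = P(x) \oplus P(y)$ as left DG $R$-modules, so $P(x)$ and $P(y)$ are projective and their classes generate $K_0(DGP(R))$. (2) Compute $\mathrm{End}_{DG(R)}(P(x)) \cong e(x) R e(x)$ and similarly for $P(y)$, using the relations $aba = bab = 0$; I would check that in each case the only idempotents are $0$ and $1$, which gives indecomposability, and that $\op{Hom}(P(x),P(y))$ and $\op{Hom}(P(y),P(x))$ contain no isomorphisms (they are generated by $a$ and $b$, which are not invertible), so $P(x) \not\cong P(y)$ even up to cohomological shift — here one uses that $\op{gr}(b)=1$ while $\op{gr}(a)=0$, so $[P(x)]$ and $[P(y)]$ are not proportional. (3) Invoke the general result that $K_0(K^b(\mathrm{proj}\text{-}R))$ is free abelian on the indecomposable projectives; feeding in (2), this group is $\Z[P(x)] \oplus \Z[P(y)] \cong \Z\lan x, y\ran$, with $x \mapsto [P(x)]$ and $y \mapsto [P(y)]$. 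One should also note the cohomological shift acts by $-1$ on classes, consistent with the triangulated structure, so no extra relations are introduced.

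I expect the main obstacle to be making step (3) rigorous in the DG setting rather than the classical one: one must be careful that $DGP(R)$ as defined (smallest full subcategory closed under $[1]$ and mapping cones containing $P(x), P(y)$) really does have $H(DGP(R)) \simeq K^b(\mathrm{proj}\text{-}R)$ with no surprises, and that the standard dévissage argument — every object is a finite iterated cone of shifts of $P(x)$ and $P(y)$, and the Euler characteristic in $K_0$ is well-defined and complete — goes through. Since the differential on $R$ is trivial, $R$ is just a graded algebra and $H(DGP(R))$ is genuinely the bounded homotopy category of the additive category with objects $P(x), P(y)$ and their grading shifts, so the classical statement applies verbatim; I would cite \cite{Ke} or \cite{BL} for this and keep the verification light. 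A secondary, more routine point is simply exhibiting the explicit bases of the Hom spaces between $P(x)$ and $P(y)$ to justify indecomposability — this is a short finite computation using the monomial basis of $R$ inherited from the quiver algebra modulo $aba = bab = 0$.
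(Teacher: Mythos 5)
Your step (1) coincides with the paper's surjectivity argument and is fine, but the whole burden of injectivity rests on step (3), and that is where the proposal breaks down. The identification of $H(DGP(R))$ with the bounded homotopy category of complexes over the additive category generated by $P(x)$, $P(y)$ and their grading shifts is not available ``verbatim'': in $DGP(R)$ the triangulated shift \emph{is} the internal cohomological shift of the graded algebra $R$, and an iterated cone is a one-sided twisted complex whose twisted differential may involve the degree-$1$ elements $b$, $ab$, $ba$ joining summands with the same shift (e.g.\ the cone of the closed degree-$0$ morphism $P(y)\ra P(x)[1]$ given by right multiplication by $b$), not only degree-$0$ maps between adjacent positions. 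If you instead apply the classical theorem to the honest homotopy category of bounded complexes of graded projectives, it yields a free abelian group on \emph{all} indecomposable graded projectives $P(x)[m]$, $P(y)[m]$, i.e.\ $\Z[t,t^{-1}]^2$ rather than $\Z\lan x,y\ran$; the collapse $t\mapsto -1$ coming from $[P[1]]=-[P]$ is exactly the point you dismiss with ``no extra relations are introduced,'' and it is not automatic. In a triangulated category, knowing that the generators are indecomposable with local endomorphism rings and pairwise non-isomorphic even up to shift does \emph{not} imply that $K_0$ is free on their classes: triangles (here, twisted complexes mixing $a$ with the positive-degree elements) can impose further relations, and non-isomorphism of objects says nothing about linear independence in $K_0$ (compare the bounded derived category of a finite-dimensional algebra, which has many indecomposables but small $K_0$). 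So steps (2)+(3) as written do not prove injectivity; the indecomposability computations, while correct (note that in $H(DGP(R))$ one takes degree-$0$ morphisms, so $\op{End}(P(x))\cong\F$ since $ab$ sits in degree $1$), are neither needed nor sufficient.

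What injectivity requires --- and what the paper actually does --- is an additive invariant on $K_0(DGP(R))$: for an object $N$, the (graded Euler characteristics of the) complexes $e(x)N$ and $e(y)N$ are homotopy invariant and additive on cones, hence define a homomorphism $K_0(DGP(R))\ra\Z^2$; this is the paper's ``dimension vector'' of $P(x)$ and $P(y)$ (properly interpreted with signs so that it descends to $K_0$). The images of $[P(x)]$ and $[P(y)]$ under this map are linearly independent, which gives injectivity in one line. You should replace step (3) by the construction of such an Euler-characteristic homomorphism; without it, the reduction to the classical freeness statement has a genuine gap.
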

\begin{proof}
Since $H(DGP(R))$ is generated by $P(x)$ and $P(y)$, the map
$$\begin{array}{ccc}
\Z\lan x,y \ran & \ra & K_0(DGP(R)) \\
 x & \mapsto & [P(x)], \\
 y & \mapsto & [P(y)],
\end{array}$$
is surjective.
It is also injective because the dimension vectors of $P(x)$ and $P(y)$ are $(2,1)$ and $(1,2)$ which are linearly independent.
\end{proof}

\subsection{The DG $R$-bimodule $M$}
As left $R$-modules, we define
$$M=P(y) \bigoplus P(x)[1],$$
where $_ym_x \in P(y)$ and $ _xm_y \in P(x)[1]$ are the generators with $\op{gr}(_ym_x)=0, \op{gr}(_xm_y)=-1$.
In other words, $M$ is a $6$-dimensional $\F$-vector space with a basis:
$$\{_ym_x,~ a\cdot{}_ym_x,~ ba\cdot{}_ym_x,~ _xm_y,~ b\cdot{}_xm_y,~ ab\cdot{}_xm_y\}.$$
The right $R$-module structure on $M$ is given by:
$$_ym_x\cdot e(x)={}_ym_x, \quad _xm_y\cdot e(y)={}_xm_y, \quad {}_ym_x\cdot a=b\cdot{}_xm_y, \quad _xm_y\cdot b=a\cdot{}_ym_x.$$
The differential on $M$ is trivial.
It is easy to verify that $M$ is a well-defined $R$-bimodule.

\begin{lemma}  \label{tensor}
We have tensor products $M \ot_R P(x)\cong P(y), M \ot_R P(y)\cong P(x)[1]$ as DG left $R$-modules.
Hence, tensoring with $M$ is an endorfunctor of $DGP(R)$.
\end{lemma}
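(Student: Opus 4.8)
The plan is to compute the two tensor products explicitly as left submodules of $M$ and then match them with $P(y)$ and $P(x)[1]$ on the nose; the last claim is then formal. Recall the standard natural isomorphism $N\ot_R Re(z)\cong Ne(z)$ for a right $R$-module $N$ and an idempotent $e(z)$, given by $n\ot re(z)\mapsto n\cdot re(z)$ with inverse $n=ne(z)\mapsto n\ot e(z)$. Applied to the bimodule $M$ this gives $M\ot_R P(x)\cong Me(x)$ and $M\ot_R P(y)\cong Me(y)$ as left $R$-modules, and since every differential in sight is trivial these are isomorphisms of DG modules as well. So it suffices to identify the left submodules $Me(x)$ and $Me(y)$ of $M$.

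First I would pin down these submodules using the given right action, the key point being that right multiplication by $e(x)$ and by $e(y)$ splits $M$ and that this splitting interchanges the two summands of the defining left-module decomposition $M=P(y)\oplus P(x)[1]$. Indeed, from ${}_ym_x={}_ym_x\cdot e(x)$, ${}_xm_y={}_xm_y\cdot e(y)$ and $e(x)e(y)=e(y)e(x)=0$ one gets ${}_ym_x\cdot e(y)=0$ and ${}_xm_y\cdot e(x)=0$; since the left $R$-action commutes with right multiplication by $e(x)$ and $e(y)$, the same vanishing persists after left-multiplying by $a,ba,b,ab$. Hence $Me(x)$ is the left submodule spanned by $\{{}_ym_x,\,a\cdot{}_ym_x,\,ba\cdot{}_ym_x\}$, which is exactly the summand $P(y)\subseteq M$ with ${}_ym_x$ corresponding to the generator $e(y)$; and $Me(y)$ is the left submodule spanned by $\{{}_xm_y,\,b\cdot{}_xm_y,\,ab\cdot{}_xm_y\}$, which is exactly the summand $P(x)[1]\subseteq M$ with ${}_xm_y$ corresponding to the shifted generator. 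The grading bookkeeping is then immediate: $\op{gr}({}_ym_x)=0=\op{gr}(e(y))$, so $M\ot_R P(x)\cong P(y)$ as graded DG modules, while $\op{gr}({}_xm_y)=-1$ matches the degree of the generator of $P(x)[1]$, so $M\ot_R P(y)\cong P(x)[1]$.

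For the final sentence, I would argue that $M\ot_R-$ restricts to an endofunctor of $DGP(R)$. It is additive, it commutes with the shift $[1]$, and, because $d_M=0$, it carries the mapping cone of any morphism $f$ of DG modules to the mapping cone of $M\ot_R f$, the cone being a direct sum equipped with a twisted differential and tensoring with $M$ commuting with both operations. Since $DGP(R)$ is by definition the smallest full subcategory of $DG(R)$ containing $P(x)$ and $P(y)$ and closed under $[1]$ and mapping cones, and the previous paragraph shows $M\ot_R-$ sends the generators $P(x),P(y)$ into $DGP(R)$, an induction on the number of cone-operations shows that $M\ot_R-$ maps every object of $DGP(R)$ into $DGP(R)$.

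I do not expect a genuine obstacle here; the one place that takes a moment of care is the middle paragraph, namely verifying that the right-idempotent splitting $M=Me(x)\oplus Me(y)$ coincides with the left-module splitting $P(y)\oplus P(x)[1]$ rather than being transverse to it — it is this interchange of the roles of $x$ and $y$ that makes tensoring with $M$ behave like multiplication by $\im$ — together with keeping the cohomological shift on the $P(x)[1]$ summand straight.
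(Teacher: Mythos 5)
Your proposal is correct and follows essentially the same route as the paper: you identify $M\ot_R P(x)\cong Me(x)$ and $M\ot_R P(y)\cong Me(y)$ with the explicit three-element bases (the paper writes these directly as $\{{}_ym_x\ot e(x),\ a\cdot{}_ym_x\ot e(x),\ ba\cdot{}_ym_x\ot e(x)\}$, etc.), match them with $P(y)$ and $P(x)[1]$ including the grading shift, and then conclude that $M\ot_R-$ preserves $DGP(R)$ because that category is generated by $P(x),P(y)$ under $[1]$ and cones. Your extra remarks (the standard isomorphism $N\ot_R Re(z)\cong Ne(z)$ and the explicit closure-under-cones argument) only make explicit what the paper leaves implicit.
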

\begin{proof}
We directly compute that $M \ot_R P(x)$ has a basis
$$\{_ym_x \ot e(x),~ a\cdot{}_ym_x \ot e(x),~ ba\cdot{}_ym_x \ot e(x)\}$$
so that it is isomorphic to $P(y)$ as left $R$-modules.
The other isomorphism is similar.
Hence tensoring with $M$ preserves $DGP(R)$ since $DGP(R)$ is generated by $P(x), P(y)$.
\end{proof}

Comparing to the isomorphism $Q^2\cong Q^0[1]$ in $\ci$, we have the following isomorphisms of DG $R$-bimodules.

\begin{lemma}\label{isoM}
The tensor product $M\ot_R M$ is isomorphic to $R[1]$ as DG $R$-bimodules.
\end{lemma}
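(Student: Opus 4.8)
The plan is to construct an explicit isomorphism of DG $R$-bimodules $\phi: M \ot_R M \to R[1]$ by computing both sides as $\F$-vector spaces and matching generators. First I would compute $M \ot_R M$ directly. Using Lemma \ref{tensor}, as a left $R$-module $M \ot_R M = M \ot_R (P(y) \oplus P(x)[1]) \cong (M \ot_R P(y)) \oplus (M \ot_R P(x))[1] \cong P(x)[1] \oplus P(y)[1]$. On the other hand, $R[1] = R e(x)[1] \oplus R e(y)[1] = P(x)[1] \oplus P(y)[1]$ as left $R$-modules, so the two sides already agree as graded left $R$-modules, and I would track down explicit bases: $M \ot_R M$ is spanned by the six elements ${}_xm_y \ot {}_ym_x$, ${}_ym_x \ot {}_xm_y$ together with their left translates by $a, ba$ and $b, ab$ respectively (after using the right-module relations ${}_ym_x \cdot a = b \cdot {}_xm_y$ etc.\ to reduce all tensors to these normal forms).

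Next I would define $\phi$ on generators by sending ${}_xm_y \ot {}_ym_x \mapsto e(y)$ (in degree $\op{gr}({}_xm_y) + \op{gr}({}_ym_x) = -1$, matching $e(y) \in R[1]$ which sits in degree $-1$) and ${}_ym_x \ot {}_xm_y \mapsto e(x)$ (degree $-1 + 0$... here I must be careful: $\op{gr}({}_ym_x)=0$, $\op{gr}({}_xm_y)=-1$, so this is degree $-1$, matching $e(x)$ in $R[1]$). Then $\phi$ is forced on all of $M \ot_R M$ by left $R$-linearity, and I would check it is also right $R$-linear by verifying the four defining actions; the only nontrivial checks are that $({}_xm_y \ot {}_ym_x) \cdot a$ and similar expressions map correctly, which comes down to ${}_ym_x \cdot a = b \cdot {}_xm_y$ and the relation $aba = bab = 0$ in $R$ killing the would-be degree-mismatched terms. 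Since the differentials on $M$ and $R$ are both trivial, $\phi$ is automatically a chain map. Finally I would observe that $\phi$ sends a basis to a basis, hence is an isomorphism.

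The main obstacle I anticipate is bookkeeping rather than conceptual: correctly normalizing the tensor expressions using the right $R$-module structure on $M$ (the relations ${}_ym_x \cdot a = b \cdot {}_xm_y$ and ${}_xm_y \cdot b = a \cdot {}_ym_x$ move a right action across the tensor into a left action on the second factor), and confirming that the grading shift $[1]$ lands consistently on both factors so that $\phi$ is degree-preserving. One should also double-check that the relation $aba = bab = 0$ in $R$ is exactly what forces $M \ot_R M$ to be six-dimensional and not larger, i.e.\ that elements like $ab \cdot {}_xm_y \ot {}_ym_x$ reduce via the bimodule relations without producing new basis vectors. Once the normal forms are pinned down, the verification that $\phi$ respects both module structures and the (trivial) differential is a short finite computation.
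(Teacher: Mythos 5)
Your overall strategy --- write down an explicit map between $M\ot_R M$ and $R[1]$ determined by the two generators ${}_xm_y\ot{}_ym_x$ and ${}_ym_x\ot{}_xm_y$, check bimodule-linearity and the gradings, and conclude by matching bases --- is exactly the paper's; the paper just writes the map in the other direction, $f\colon R[1]\ra M\ot_R M$ with $e(x)\mapsto{}_xm_y\ot{}_ym_x$ and $e(y)\mapsto{}_ym_x\ot{}_xm_y$, and verifies compatibility with $a$ and $b$. However, your explicit assignment has the idempotents swapped, and as stated the map does not exist. Since ${}_xm_y\in e(x)Me(y)$ and ${}_ym_x\in e(y)Me(x)$, the element ${}_xm_y\ot{}_ym_x$ lies in $e(x)\,(M\ot_R M)\,e(x)$, so any bimodule (indeed any left-module) map must send it into $e(x)R[1]e(x)=\lan e(x),\, ab\ran$, and by degree to a multiple of $e(x)$, not $e(y)$. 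With your assignment ${}_xm_y\ot{}_ym_x\mapsto e(y)$ one gets $\phi({}_xm_y\ot{}_ym_x)=\phi(e(x)\cdot{}_xm_y\ot{}_ym_x)=e(x)\cdot e(y)=0$, so the proposed map is either ill-defined or kills a basis vector; in particular it cannot be an isomorphism. The grading check you emphasize cannot detect this, because $e(x)$ and $e(y)$ sit in the same degree of $R[1]$; it is the idempotent bookkeeping that pins down the images.

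The fix is simply to swap the targets: ${}_xm_y\ot{}_ym_x\mapsto e(x)$ and ${}_ym_x\ot{}_xm_y\mapsto e(y)$, which is precisely the inverse of the paper's $f$. After this correction the rest of your outline goes through: the left-module identification $M\ot_R M\cong P(x)[1]\oplus P(y)[1]$ via Lemma \ref{tensor} gives the six-element basis and shows the map sends a basis to a basis, and right-linearity is the short computation of moving $a$ and $b$ across the tensor sign using ${}_ym_x\cdot a=b\cdot{}_xm_y$ and ${}_xm_y\cdot b=a\cdot{}_ym_x$ --- the same computation as the paper's well-definedness check for $f$; the relations $aba=bab=0$ are not really where the content lies. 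Since the differentials on $M$ and $R$ are trivial, nothing further is needed.
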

\begin{proof}
We define the following map of $R$-bimodules
$$\begin{array}{cccc}
f: & R[1] & \ra & M\ot_R M \\
 & e(x) & \mapsto & {}_xm_y\ot{}_ym_x, \\
 & e(y) & \mapsto & {}_ym_x \ot{}_xm_y.
\end{array}$$
This is well-defined since
\begin{gather*}
f(be(x))=b \cdot {}_xm_y\ot{}_ym_x={}_ym_x \cdot a\ot{}_ym_x={}_ym_x \ot a \cdot {}_ym_x={}_ym_x\ot{}_xm_y\cdot b=f(e(y)b), \\
f(ae(y))=a \cdot {}_ym_x\ot{}_xm_y={}_xm_y \cdot b\ot{}_xm_y={}_xm_y \ot b \cdot {}_xm_y={}_xm_y\ot{}_ym_x\cdot a=f(e(x)a).
\end{gather*}
The gradings $\op{gr}({}_xm_y\ot{}_ym_x)=\op{gr}({}_ym_x \ot{}_xm_y)=-1$ agree with the gradings $\op{gr}(e(x))=\op{gr}(e(y))=-1$ in $R[1]$.
It is easy to see that this map is actually an isomorphism.
\end{proof}

\subsection{The action of $\ci$ on $DGP(R)$}
We define a functor $\tau: \ci \ra DG(R\ot_{\F} R^{op})$ of monoidal DG categories by $\tau(Q^0)=R$ and $\tau(Q)=M$ on the objects.
Here $\tau$ maps the monoidal structure on $\ci$ to the monoidal structure on $DG(R\ot_{\F} R^{op})$ given by tensoring $R$-bimodules over $R$.
From now on, we use $\Hom_{R}$ to denote Hom spaces in $DG(R\ot_{\F} R^{op})$ for simplicity.

\vspace{.2cm}
\n $\bullet$ {\bf Definition of $\tau$ on morphisms.}
There are $3$ nontrivial generators $cup, cap, hf$ in $\ci$.

Recall that $cup \in \Hom_{\ci}(Q^0, Q^2)$ and $cap \in \Hom_{\ci}(Q^2, Q^0)$ give the isomorphism $Q^2 \cong Q^0[1]$.
We define $$\tau(cup)=f, \quad \tau(cap)=f^{-1},$$
where $f \in \Hom_R(R, M\ot M)$ is given in the proof of Lemma \ref{isoM}.

\vspace{.2cm}
For the half strand $hf \in \Hom_{\ci}(Q^0, Q)$, we define $\tau(hf)=g \in \Hom_R(R, M)$ as
$$\begin{array}{cccc}
g: & R & \ra & M \\
 & e(x) & \mapsto & {}_xm_y\cdot b=a\cdot{}_ym_x, \\
 & e(y) & \mapsto & {}_ym_x\cdot a=b\cdot{}_xm_y.
\end{array}$$
We check that $g$ is a well-defined $R$-bimodule map:
\begin{gather*}
g(be(x))=b \cdot {}_xm_y\cdot b = {}_ym_x\cdot ab = g(e(y)b), \\
g(ae(y))=a \cdot {}_ym_x\cdot a = {}_xm_y\cdot ba = g(e(x)a).
\end{gather*}

\vspace{.2cm}
\n $\bullet$ {\bf Verification of $\tau$ on the relations.}
By definition $\tau$ maps the isotopy relation (R1-c) to identity homomorphisms of $R$-bimodules.
The relations (R2) and (R3) are preserved under $\tau$ because $\tau(cup)=f: R[1] \ra M \ot M$ is the isomorphism.
It remains to verify the relations (R1-b) and (R4).

\vspace{.2cm}
For (R1-b), it is enough to show that
$$(\tau(cap) \ot \tau(id_Q)) \circ (\tau(id_Q) \ot \tau(cup))=(f^{-1} \ot id_M) \circ (id_M \ot f) = id_M \in \Hom_R(M, M).$$
We check this on the generator ${}_xm_y$:
\begin{align*}
(f^{-1} \ot id_M) \circ (id_M \ot f)({}_xm_y)&=(f^{-1} \ot id_M) \circ (id_M \ot f)({}_xm_y \ot e(y)) \\
&=(f^{-1} \ot id_M)({}_xm_y \ot {}_ym_x \ot {}_xm_y) \\
&=e(x)\cdot {}_xm_y = {}_xm_y.
\end{align*}
The verification on the other generator ${}_ym_x$ is similar.

\vspace{.2cm}
For (R4), it is enough to show that
$$\tau(hf) \ot \tau(id_Q) = g \ot id_M = id_M \ot g = \tau(id_Q) \ot \tau(hf) \in \Hom_R(M, M \ot M).$$
We check this on the generator ${}_xm_y$:
\begin{align*}
(g \ot id_M)({}_xm_y)=&(g \ot id_M)(e(x) \ot {}_xm_y) ={}_xm_y\cdot b \ot {}_xm_y= {}_xm_y \ot b \cdot {}_xm_y\\
=&{}_xm_y \ot {}_ym_x \cdot a = (id_M \ot g)({}_xm_y \ot e(y)) = (id_M \ot g)({}_xm_y).
\end{align*}
The verification on the other generator ${}_ym_x$ is similar.

\vspace{.2cm}
As a conclusion, the functor $\tau: \ci \ra DG(R\ot R^{op})$ is well-defined.
Since tensoring with $\tau(Q)=M$ is an endorfunctor of $DGP(R)$, $\tau$ induces a categorical action $\eta: \ci \times DGP(R) \ra DGP(R)$ via tensoring with $R$-bimodules.

\vspace{.2cm}
\n $\bullet$ {\bf Computation of $K_0(\eta)$.}
Let $K_0(\eta): K_0(\ci) \times \Z\lan x,y \ran \ra \Z\lan x,y \ran$ be the induced map on the Grothendieck groups under the isomorphism $K_0(DGP(R)) \cong \Z\lan x,y \ran$ in Lemma \ref{K0R}.
Recall from Lemma \ref{K0R} that $\gamma: \Z[\im] \ra K_0(\ci)$ is surjective.
We compute the pullback of $K_0(\eta)$ under $\gamma$ by Lemma \ref{tensor}:
\begin{gather*}
\im \cdot x = [Q] \cdot [P(x)] =[M \ot_R P(x)]=[P(y)]=y, \\
\im \cdot y = [Q] \cdot [P(y)] =[M \ot_R P(y)]=[P(x)[1]]=-x.
\end{gather*}
The pullback map agrees with the action of $\Z[\im]$ on $\Z\lan x,y \ran$.

Since this action is faithful, we conclude that $\gamma$ is an injective map, hence an isomorphism.
Therefore, we finish proving Theorems \ref{thm1} and \ref{thm2}.

\end{document}